\documentclass{article} 

\usepackage{amsmath,amsthm,amssymb,amsfonts}
\usepackage{graphicx}
\usepackage{geometry} % to change the page dimensions
\newtheorem{Theorem}{Theorem}[section]
\theoremstyle{plain}

\newtheorem{Proposition}[Theorem]{Proposition}

\numberwithin{equation}{section}

\title{Sample genealogies within a Brownian excursion}
\author{Sergey Bocharov and Simon C. Harris}
\begin{document}
\maketitle
\begin{abstract}
In this article, we apply It\^o's excursion theory to find the joint law of minima of a Brownian excursion conditioned to go above level $1$ over the intervals defined by $k$ independent identically distributed (i.i.d.) points of intersection of the excursion with level $a \in (0,1]$.

%This approach offers a quick way to find the limiting joint law of split times of an i.i.d. sample of particles alive at time $at$ in a critical Galton-Watson tree conditioned to survive to time $t$.

This approach offers a intuitive way to find the joint law of coalescent times of an i.i.d. sample of k particles alive at time $a$ in Aldous' continuum random tree of height $1$. This can be thought of as \emph{sampling of the limit} of critical Galton-Watson trees conditioned to survive a large time, agreeing with some special cases obtained in \cite{HJR20} and \cite{HHKP24} which instead consider \emph{the limit of sampling} from critical Galton-Watson trees.
\end{abstract}

%====================================================================================================
\section{Introduction and Main Results}

Our object of study will be the Brownian excursion from level $0$ conditioned to go above level $1$. One way to construct it (see, for example, \cite{NP89}) is to take a standard Brownian motion $B$, let $\sigma$ be the time of the last visit to $0$ before $B$ hits level $1$ for the first time and let $\tau$ be the time of the first visit to $0$ after $B$ hits level $1$ for the first time.
Then the process
\begin{equation}
\label{x}
X_t := B_t \mathbf{1}_{[\sigma, \tau]}(t) 
\end{equation}
is a Brownian excursion from level $0$ conditioned to go above level $1$. 
\begin{figure}[!htbp]
\begin{center}
\includegraphics[scale=0.75]{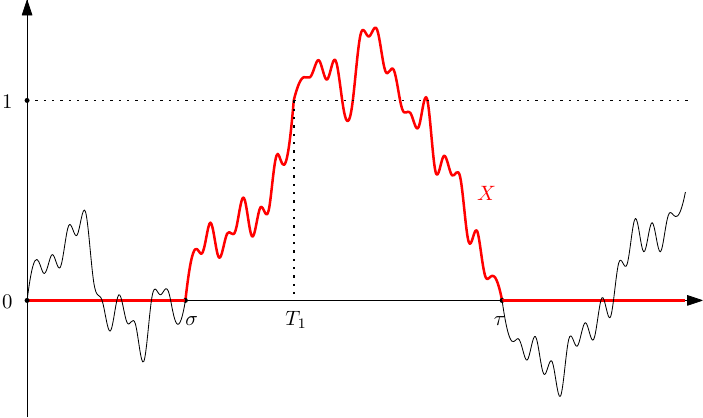}
\caption{A Brownian excursion conditioned to go above level $1$.}
\end{center}
\end{figure}

The theorem on the next page is our main result.
\begin{Theorem}
\label{main}
Let $X_t$, $t \geq 0$ be a Brownian excursion from level $0$ conditioned to go above level $1$ 
and, for $a \in (0,1]$, let $L^a_t$, $t \geq 0$ be the local time at level $a$ of the process $X_t$, $t \geq 0$.
Suppose that $U_1$, ..., $U_k$ are random variables which, conditional on $(L^a_t)_{t \geq 0}$, are independent with distribution
\[
\mathbb{P} \Big( U_1 \leq t \Big\vert \big( L^a_t \big)_{t \geq 0} \Big) = \frac{L^a_t}{L^a_\infty},
\]
where $L^a_\infty$ is the total amount of local time at level $a$ accumulated by $X$. 
Then $U_1$, ..., $U_k$ represent the times of a uniform sample of $k$ intersections of level $a$ by $X$.  
% with replacement  
Further, define 
\[
S_i := \min_{t \in [U_{(i)}, U_{(i+1)}]} X_t
\]
where $U_{(1)}$, ..., $U_{(k)}$ are the order statistics of $U_1$, ..., $U_k$.

Then the joint distribution of $S_1$, ..., $S_k$ can be expressed in the form 
\begin{align}
\label{tail_a}
\mathbb{P} \big(S_1 \!>\! x_1, \cdots, \ S_{k-1} \!>\! x_{k-1}\big) = 
&k \int_0^\infty \frac{1}{(1+ as)^2} \Big[ \prod_{i=1}^{k-1} \frac{(a-x_i)s}{1+(a-x_i) s} \Big] \mathrm{d}s\nonumber\\
&\qquad -k \int_0^\infty \frac{1-a}{(1+ as)^2} \Big[ \prod_{i=1}^{k-1} \frac{(a-x_i)s}{1-x_i +(a-x_i) s} \Big] \mathrm{d}s,
\end{align}
where $x_1$, ..., $x_k \in [0,a]$. In particular, the joint probability density of $S_1$, ..., $S_k$ is 
given by
\begin{align}
\label{density_a}
f_{S_1, ..., S_{k-1}}(x_1, ..., x_{k-1}) 
= &k \int_0^\infty  \frac{1}{(1+as)^2} \bigg[ \prod_{i=1}^{k-1} \frac{s}{(1+(a-x_i) s)^2} \bigg]\mathrm{d}s \nonumber\\
&\qquad -k \int_0^\infty  \frac{1-a}{(1+as)^2} \bigg[ \prod_{i=1}^{k-1} \frac{(1-a)s}{(1-x_i+(a-x_i) s)^2} \bigg]\mathrm{d}s,
\end{align}
where $x_1$, ..., $x_k \in [0,a]$.
\begin{figure}[!htbp]
\begin{center}
\includegraphics[scale=0.95]{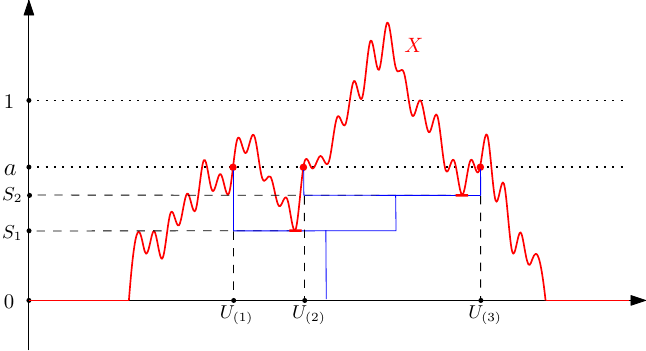}
\caption{Minima of a Brownian excursion between three sampled points (and an embedded genealogical  tree in blue).}
\end{center}
\end{figure}
\end{Theorem}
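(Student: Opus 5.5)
We parametrise the excursion by the inverse local time at level $a$. Let $\tau^a_\ell := \inf\{t : L^a_t > \ell\}$ for $\ell \in [0, L^a_\infty)$. Since $U_i$ has conditional law $L^a_t/L^a_\infty$, we can write $U_i = \tau^a_{V_i}$, where $V_1,\dots,V_k$ are, conditionally on $L^a_\infty$, i.i.d.\ uniform on $[0,L^a_\infty]$. Such a $U_i$ is almost surely a time at which $X$ sits at level $a$.

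By It\^o's excursion theory, applied to $X$ after its first hit of $a$, the excursions of $X$ away from $a$, indexed by local time $\ell$, form a Poisson point process. With the normalisation $L^a$ of semimartingale local time, the rate of excursions that reach below level $x<a$ is $\frac{1}{2(a-x)}$. The rate of those that reach $0$ is $\frac{1}{2a}$, and the rate of those that reach $1$ is $\frac{1}{2(1-a)}$.

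The first step is to identify $L^a_\infty$ and the law of the process. Before the first hit of $1$, the local time runs until the first downward excursion hits $0$. After the first hit of $1$, it runs until the next excursion hitting $0$. The conditioning ``goes above $1$'' means that an upward excursion reaching $1$ occurs before the first excursion reaching $0$. Hence, on the conditioned event, $L^a_\infty = E_1 + E_2$. Here $E_1$ is the local time up to the first hit of $1$. Its density comes from competing exponential clocks with rates $\frac1{2a}$ and $\frac1{2(1-a)}$, and after conditioning it is exponential with rate $\frac1{2a}+\frac1{2(1-a)}$. The second piece $E_2$ is an independent exponential with rate $\frac1{2a}$, counting excursions hitting $0$.

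Conditionally on the excursion process, $S_i \le x_i$ holds iff some excursion below level $x_i$ occurs in the local-time interval $(V_{(i)},V_{(i+1)})$. (The minimum is $\le a$ automatically, and upward excursions are irrelevant.) Since the excursions reaching below $x$ are a thinned Poisson process, independent of the other marks except on the parts already conditioned, we get
\[
\mathbb P(S_i>x_i\ \forall i \mid L^a_\infty, V) = \prod_{i=1}^{k-1} \exp\!\Big(-\tfrac{1}{2}\big(\tfrac1{a-x_i}-\tfrac1a\big) G_i\Big),
\]
where $G_i = V_{(i+1)}-V_{(i)}$. The conditioning on ``no excursion reaching $0$ in the interior of the interval'' is what produces the subtraction $\tfrac1a$.

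The second step is to average over the uniform spacings. For $k$ uniform points on $[0,L]$, the joint density of the $k-1$ interior spacings, together with the remaining mass $R$, is $k!/L^k$ on the simplex. It is easier to randomise: since $L^a_\infty$ is built from exponential pieces, we can use the standard representation of i.i.d.\ uniform spacings times an independent Gamma variable as i.i.d.\ exponentials. I would introduce an auxiliary variable $s$ via the identity $\int_0^\infty e^{-\lambda s}\dots$, i.e.\ write $1/L^{k}$-type factors as Laplace integrals. Each spacing then contributes a factor of the form $\frac{(a-x_i)s}{1+(a-x_i)s}$ after computing $\int_0^\infty e^{-g/s'}e^{-c g}\,dg$-type integrals. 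The factor $\frac1{(1+as)^2}$ comes from the two outer pieces (before $V_{(1)}$ and after $V_{(k)}$), with exponential rate $\frac1{2a}$.

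The two-term structure (a difference) in the formula arises from the conditioning on reaching $1$. One writes $\mathbb P(\cdot \cap \{\text{hit }1\}) = \mathbb P(\cdot) - \mathbb P(\cdot\cap\{\text{no hit of }1\})$ for the unconditioned excursion measure, restricted to excursions reaching $a$. The second term is the same computation, with rates for excursions that avoid $1$ as well. This gives denominators $1-x_i+(a-x_i)s$ and the prefactor $1-a$. Dividing by $\mathbb P(\text{hit }1 \mid \text{hit } a)=a$ normalises everything.

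The density \eqref{density_a} then follows by differentiating \eqref{tail_a} in each $x_i$, using
\[
\partial_x \frac{(a-x)s}{1+(a-x)s} = -\frac{s}{(1+(a-x)s)^2}, \qquad \partial_x \frac{(a-x)s}{1-x+(a-x)s} = -\frac{(1-a)s}{(1-x+(a-x)s)^2}.
\]

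The main obstacle is the bookkeeping in the second step: handling the uniform sample of local-time points against the random total $L^a_\infty$. This requires the size-biasing by $(L^a_\infty)^k$ relative to Lebesgue measure, and choosing the correct change of variables so that the integral over the simplex factorises into the product over $i$ with a single remaining integral in $s$. I would first try writing the joint law of $(V_{(1)},G_1,\dots,G_{k-1},R)$ under the excursion measure restricted to hitting $a$. This joint law is a product of exponential densities times $k!\,L^{-k}$, and the $L^{-k}$ factor would be removed via $L^{-1}=\int_0^\infty e^{-Lu}\,du$ repeatedly, or via a single Gamma identity, which produces the variable $s$.
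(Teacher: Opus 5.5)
Your proposal is correct and follows essentially the paper's route: Itô excursion rates $n_i=\frac{1}{2(a-x_i)}-\frac{1}{2a}$ give a conditional product of exponentials in the local-time gaps. The conditioning on reaching $1$ is handled as in Proposition \ref{two_exp}, via $\mathbb{P}(\text{hit }1\mid L)=1-\mathrm{e}^{-\mu L}$; this is your difference of two terms divided by $a=\mu/(\lambda+\mu)$. The average over uniform spacings is Proposition \ref{exp}'s size-biasing by $L^k$ together with the Gamma identity $L^{-k}=\frac{1}{(k-1)!}\int_0^\infty s^{k-1}\mathrm{e}^{-sL}\,\mathrm{d}s$. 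The decomposition $L^a_\infty=E_1+E_2$ is not needed. The case $a=1$, where $\frac{1}{2(1-a)}$ blows up, should be treated separately as the paper does, although the second term simply vanishes there.
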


We will prove this result in Section 3 making use of It\^o's excursion theory (a good account of which is given by Rogers in \cite{R89}). The proof will require some identities about a sample of i.i.d. random variables which are uniform on an interval of an exponentially-distributed length, which we state in Section 3 and prove separately in Section 4.

Our work is motivated by the correspondence between Brownian excursions and critical branching processes, which offers an efficient way of studying genealogies of such branching processes (see,  for example, \cite{L10}, \cite{NP89} and \cite{P04}), and also some recent results on genealogies in critical branching processes from \cite{HJR20} and \cite{HHKP24}. This will be discussed in Section 2 below.

%====================================================================================================
\section{Motivation}

A continuous-time Galton-Watson process is a stochastic process that starts with a single particle at time $t=0$. This particle lives for a random time with the $Exp(\beta)$ distribution for some $\beta >0$. When it dies it produces a random number $\xi$ of new particles according to the offspring distribution 
\[
\mathbb{P} (\xi = k) = p_k \ , \quad k = 0, \ 1, \ 2, \ ...
\]
From this point onward all newly-born particles (if there are any) replicate the behaviour of the initial particle independently of each other. We let 
\[
m := \mathbb{E} \xi = \sum_{k \geq 0} k p_k
\]
denote the mean of the offspring distribution. Assuming $m < \infty$, the process is said to be supercritical, critical or subcritical whenever $m>1$, $m=1$ or $m<1$ respectively. 

We are only interested in the critical case. We are also going to assume throughout this article that the variance of the offspring distribution, which we denote by $v^2$ is finite:
\[
v^2 := \mathbb{E} (\xi^2) - \big( \mathbb{E} \xi \big)^2= \sum_{k \geq 0} k^2 p_k - 1 < \infty.
\] 
It has been known for a long time from early works of Kolmogorov and Yaglom that if $N_t $ is the size of the population at time $t$ then $\mathbb{P} (N_t > 0) \sim \frac{2}{v^2 t}$ and 
\begin{equation}
\label{exp_limit}
\frac{2}{v^2} \cdot \frac{N_t}{t} \ \Big\vert \{ N_t>0 \} \Rightarrow Exp (1)
\end{equation}
as $t \to \infty$.

The discrete-time Galton-Watson process is defined in a similar way except that particles live for a deterministic one unit of time, but this  doesn't really change the asymptotic behaviour of the system. 

There has been a long interest in the study of  limiting distributions (as $t \to \infty$) of split times (or times of the last common ancestor) of particles sampled from the critical Galton-Watson tree conditioned to survive to time $t$. This goes back to an old work of Zubkov \cite{Z75} where he established the time of the last common ancestor of all particles alive at time $t$ in such a tree. For a recent overview of this topic we refer to \cite{HJR20}. Let us mention two recent results that motivated our work. The first result can be found in sections 2.3 and 6.6 of \cite{HJR20}.
 
 \begin{Theorem}[S. Harris, S. Johnston, M. Roberts (2020)]
 \label{HJR}
Consider a continuous-time critical Galton-Watson process conditioned on $\{ N_t \geq k\}$. If $k$ distinct particles are sampled uniformly from those alive at time $t$ and $S_1(t)$, ..., $S_{k-1}(t)$ are the split times of these particles then the random vector 
\[
\Big( \frac{S_1(t)}{t}, \cdots, \frac{S_{k-1}(t)}{t} \Big)
\]
converges in distribution as $t \to \infty$ to a random vector $(S_1, \cdots, S_{k-1})$ with probability density 
\begin{equation}
\label{thm1}
f_{S_1, ..., S_{k-1}} (x_1, \cdots x_{k-1}) = k \int_0^\infty  \frac{1}{(1+s)^2} \bigg[ \prod_{j=1}^{k-1} \frac{s}{(1+(1-x_j) s)^2} \bigg]\mathrm{d}s.
\end{equation}
\end{Theorem}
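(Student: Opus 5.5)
Since this is a result quoted from \cite{HJR20}, I would not reprove it from scratch. Instead I would give a self-contained route that fits the present paper: identify the limit as the $a=1$ case of Theorem~\ref{main}. Setting $a=1$ in (\ref{density_a}), the second integral carries the factor $1-a$ and vanishes. The first integral is then exactly (\ref{thm1}). So the proof splits into two parts:
\begin{enumerate}
\item show that the limit of the rescaled sample genealogy exists and equals the genealogy of $k$ uniformly chosen points at height $1$ in the continuum tree coded by an excursion conditioned to reach level $1$;
\item invoke Theorem~\ref{main} with $a=1$.
\end{enumerate}

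For the first part I would use the coalescent point process (CPP) description of the population alive at time $t$. Conditionally on survival, the individuals at time $t$ can be ordered (planar order) as $1,\dots,N_t$. Consecutive individuals $i,i+1$ have coalescence depth $H_i$, and the split time between individuals $i<j$ is $t-\max_{i\le l<j} H_l$. In the binary case (and, more generally, for a linear-fractional offspring law) the $H_l$ are exactly i.i.d. with an explicit tail. $N_t$ is the index of the first $H_l$ exceeding $t$, hence geometric. Rescaling by $t$, one finds:
\begin{itemize}
\item $N_t/t$ converges to an exponential variable, consistent with (\ref{exp_limit});
\item the rescaled depths form a Poisson point process on $[0,\text{length}]\times(0,1)$ with intensity $\mathrm{d}u\,\mathrm{d}y/(1-y)^2$ (up to the constant $v^2/2$).
\end{itemize}

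Sampling $k$ distinct individuals uniformly then becomes sampling $k$ uniform points on an interval of exponential length. The quantity $S_j$ is $1$ minus the largest atom between the $j$-th and $(j+1)$-th sampled points. Given the gap lengths, Poisson avoidance gives $\mathbb{P}(S_j>x_j)=\exp(-\text{gap}\cdot x_j/(1-x_j))$, up to normalisation. This is exactly the structure to which the identities for uniform samples on an exponential interval (stated in Section 3) apply. Integrating against the exponential length, with $s$ the length parameter, and symmetrising over the $k$ cyclic positions produces the factor $k$, the weight $(1+s)^{-2}$ and the product over $j$. This should reproduce (\ref{tail_a}) at $a=1$, and differentiating in each $x_j$ gives (\ref{thm1}). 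The conditioning $\{N_t\ge k\}$ versus $\{N_t>0\}$ is harmless in the limit, because $\mathbb{P}(N_t<k \mid N_t>0)\to0$.

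The main obstacle is general offspring laws with finite variance, where the CPP is not exact: the depths are then neither exactly independent nor identically distributed. I would handle this by a convergence argument. The contour (or Lukasiewicz) process of the critical tree conditioned to reach height $t$, rescaled by $t$, converges to the excursion $X$ conditioned to exceed $1$ (Aldous, Duquesne--Le Gall). The empirical measure of the time-$t$ individuals converges to the local time $L^1$ suitably normalised, because of the Yaglom limit (\ref{exp_limit}) applied to subtrees. Split times are continuous functionals, namely minima of the contour between sampled points. The delicate step is the joint convergence of contour and local-time measure at the fixed level $1$. I would first try to establish it by approximating the level-$1$ measure with the population in a thin band $[1-\varepsilon,1]$, using the Kolmogorov estimate $\mathbb{P}(N_t>0)\sim 2/(v^2t)$ for uniform control. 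Given this, the sampled minima converge to $S_1,\dots,S_{k-1}$ of Theorem~\ref{main} with $a=1$.
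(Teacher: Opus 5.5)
The gap is in the general finite-variance case, which is what the statement asserts. There your argument rests on one claim: the normalised counting measure of the generation-$t$ individuals converges, jointly with the rescaled contour, to $\mathrm{d}L^1_t/L^1_\infty$. In other words, sampling and the limit $t\to\infty$ may be interchanged. You call this ``the delicate step'' and only say how you would try to prove it, so it is not proved.

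It is also exactly the point the paper declines to claim. The paper gives no proof of Theorem~\ref{HJR}; it only observes that \eqref{thm1} is \eqref{density_a} at $a=1$. It offers Theorem~\ref{main} as a route to such limits only ``provided that the change of order in sampling and taking the limit is justified''.

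Contour convergence (Aldous, Duquesne--Le Gall) does not supply this step, because local time at a fixed level is not a continuous functional of the path in the uniform topology. The Kolmogorov estimate does not supply it either. It only tells you that $O(1/\varepsilon)$ of the $\asymp t$ individuals at generation $(1-\varepsilon)t$ have descendants at time $t$, each contributing a clump of size of order $\varepsilon t$. So comparing the generation-$t$ measure with the band occupation measure on every contour interval is a law-of-large-numbers statement. That needs three things you have not provided:
\begin{enumerate}
\item second-moment control: the time-$\varepsilon t$ population has variance of order $v^2\varepsilon t$, giving relative error of order $\sqrt{\varepsilon}$;
\item uniformity over contour intervals whose positions themselves depend on the subtrees above level $(1-\varepsilon)t$;
\item a justification for exchanging $t\to\infty$ with $\varepsilon\to 0$.
\end{enumerate}
Without this, or a precise reference for joint convergence of contour and level-set measure in this conditioned continuous-time setting, Theorem~\ref{main} cannot be invoked. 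The proof in \cite{HJR20} avoids the issue by working with the finite-$t$ tree (via a change of measure) and only then letting $t\to\infty$. That is why it covers general offspring laws directly.

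What you do have is a complete argument for binary (birth--death) branching, where the coalescent point process is exact. There you never actually need Theorem~\ref{main}. The geometric $N_t$ becomes an exponential length of some rate $\lambda$, and the rescaled coalescence depths become a Poisson process independent of it. Proposition~\ref{exp} with $\theta_0=\theta_k=0$ and $\theta_j=\lambda x_j/(1-x_j)$ then gives \eqref{tail_a} at $a=1$ after substituting $s=\lambda u$, with the offspring constant cancelling. This is the discrete counterpart of the paper's $a=1$ computation: coalescence depths replace It\^o excursions from level $1$ below $x_i$ (rate $n_i$, with $L\sim\mathrm{Exp}(\frac12)$).

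Two slips in that part:
\begin{enumerate}
\item The intensity $\mathrm{d}u\,\mathrm{d}y/(1-y)^2$ is correct in split-time coordinates $y=1-H/t$, but there $S_j$ is the \emph{smallest} atom in the gap. In depth coordinates the intensity is $\mathrm{d}u\,\mathrm{d}y/y^2$, and there $S_j$ is $1$ minus the largest atom.
\item No symmetrisation over cyclic positions is involved. The factor $k$ and the weight $(1+s)^{-2}$ come straight from \eqref{eq_exp}, where $s$ is the Laplace variable of \eqref{reciprocal}, not a length.
\end{enumerate}
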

Note that \eqref{thm1} agrees with our general formula \eqref{density_a} if we let $a=1$.
\begin{figure}[htbp]
\begin{center}
\includegraphics[scale=0.9]{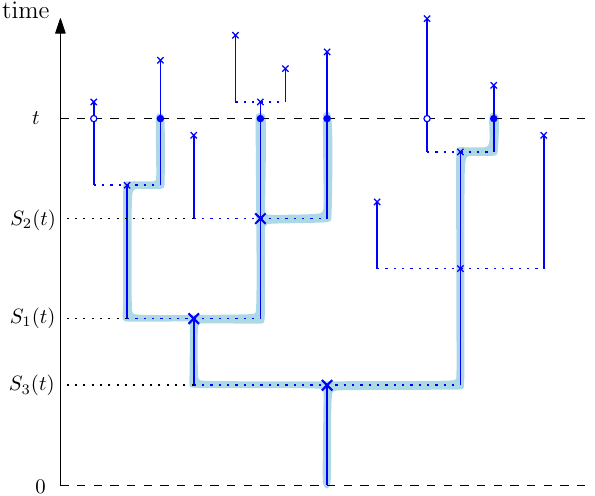}
\caption{Split times of four particles sampled at time $t$.}
\end{center}
\end{figure}

The second result can be found in Proposition 4.3 of \cite{HHKP24} (see also Step 7 of the proof of Proposition 4.3).
\begin{Theorem}[S. Harris, E. Horton, A. Kyprianou, E. Powell (2024)]
\label{HHKP}
Consider a continuous-time critical Galton-Watson process conditioned on $\{ N_t > 0\}$. Let $S(t)$ be the split time of two particles: one chosen uniformly at random from those alive at time $t$ and another one chosen independently uniformly at random from those alive at time $at$, where $a \in (0,1)$. Then 
\[
\frac{S(t)}{t} \Rightarrow S
\]
as $t \to \infty$ where the random variable $S$ has probability density of the form
\begin{equation}
\label{thm2a}
f_{S}(x) = \frac{2}{a^2} \int_0^\infty \frac{s}{(1+s)^2 \big(1 + (1 - \frac{x}{a})s\big)^2} - \frac{s}{\big(1+s+\frac{a}{1-a}\big)^2 \big(1+(1-\frac{x}{a})(s+\frac{a}{1-a})\big)^2} \mathrm{d}s
\end{equation}
which integrates to 
\begin{equation}
\label{thm2b}
f_{S}(x) = \frac{2a}{(1-a)x^3} \Big[ 2(x-a) \ln \frac{a-x}{a} + \frac{2a - ax - x}{a}\ln(1-x) \Big]
\end{equation}
using the method of partial fractions.
\end{Theorem}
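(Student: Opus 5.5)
The plan is to prove Theorem~\ref{HHKP} in two stages. First, pass from the Galton--Watson tree to its scaling limit, the Brownian continuum random tree coded by the excursion $X$ of \eqref{x}. Second, compute the law of the relevant minimum directly by It\^o's excursion theory, in the same spirit as the proof of Theorem~\ref{main}.

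\textbf{Stage 1 (reduction to $X$).} Under the usual scaling (time by $t$, with the constant $2/v^2$ absorbed as in \eqref{exp_limit}), the critical tree conditioned on $\{N_t>0\}$ converges to the tree coded by $X$. In this limit, a uniform particle at time $at$ (resp.\ $t$) corresponds to a time $U$ (resp.\ $V$) sampled from $X$ proportionally to the local time $L^a$ (resp.\ $L^1$), independently given $X$. The rescaled split time $S(t)/t$ then corresponds to $S:=\min_{[U\wedge V,\,U\vee V]}X$. The main interchange here is ``limit of sampling'' versus ``sampling of the limit''. I would justify it by convergence of contour processes together with their local-time profiles at levels $a$ and $1$, which uses the finite variance $v^2$. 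The identification of \eqref{thm2a} at the end serves as a consistency check, since the limit law is known from \cite{HHKP24}.

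\textbf{Stage 2 (excursion computation).} For $x\in(0,a)$ we have $\{S>x\}$ exactly when $U$ and $V$ lie in the same excursion of $X$ above level $x$.
\begin{itemize}
\item Decompose $X$ at level $x$. The excursion reaches $x$, and the local time it accumulates at $x$ before returning to $0$ is exponential with a rate proportional to $1/x$.
\item The excursions above $x$ form a Poisson point process in this local time. Those reaching $a$ have intensity proportional to $1/(a-x)$, and within each such excursion the local time $A_j$ at level $a$ is exponential with mean proportional to $(a-x)$.
\item Independently, each excursion reaching $a$ reaches $1$ with probability proportional to $(a-x)/(1-x)$. Given that it does, its local time $B_j$ at level $1$ is again exponential, with mean proportional to $1-a$, and it is correlated with $A_j$ only through the excursion's path. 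I would handle this by further decomposing at level $a$: conditioning on reaching $1$ adds an independent exponential piece to $A_j$.
\end{itemize}
Then
\[
\mathbb{P}(S>x)=\mathbb{E}\Big[\frac{\sum_j A_jB_j}{\sum_j A_j\sum_j B_j}\Big]\Big/\mathbb{P}(\text{reach }1),
\]
and I would use $\frac{1}{AB}=\int_0^\infty\!\!\int_0^\infty e^{-uA-vB}\,\mathrm{d}u\,\mathrm{d}v$. Campbell's formula for the Poisson process, together with the exponential local time at level $x$, turns the expectation into rational integrands in $(u,v)$.

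\textbf{Main obstacle and finish.} The hardest step, I expect, is collapsing this double integral to the single $s$-integral of \eqref{thm2a}. The conditioning on reaching level $1$ produces a difference of two terms, matching the two terms of \eqref{tail_a}. The shift $s\mapsto s+\frac{a}{1-a}$ should come out of integrating one variable explicitly. I would attempt this by substituting $v$ as a multiple of $u$, integrating along rays, and identifying the resulting Möbius transformations. After this, differentiating in $x$ gives \eqref{thm2a}. Finally, \eqref{thm2b} follows by partial fractions in $s$: each integrand is rational with double poles at $s=-1$ and $s=-a/(1-x)$ (respectively their shifted versions), and the logarithmic terms $\ln\frac{a-x}{a}$ and $\ln(1-x)$ arise from the simple-pole residues.
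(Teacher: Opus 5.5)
Your route differs from the paper's, and its computational core is correct. The paper does not prove this theorem. It takes the convergence from \cite{HHKP24} and only checks consistency with Theorem~\ref{main}. That check rests on a reduction you do not use: under $\{N_t>0\}$, the time-$at$ ancestor of a uniform time-$t$ particle is itself uniform among the time-$at$ particles, by exchangeability of their subtrees. So $S(t)$ is the split time of two independent uniform time-$at$ particles. The limit is then the $k=2$ case of \eqref{density_a}, coming from the level-$a$ expression $\frac{\lambda+\mu}{\mu}\big(J_2(0,n,0)-J_2(\mu,n+\mu,\mu)\big)$, and it becomes \eqref{thm2a} after rescaling $s$. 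You instead keep the mixed sampling ($U$ by $L^a$, $V$ by $L^1$) and decompose at the variable level $x$. Your approach avoids the ancestor argument and would also handle samples taken at several different levels. The paper's approach gets $k$ samples at one level for free from Proposition~\ref{two_exp}.

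The step you flag as the main obstacle is in fact short. Decompose at level $a$: given $A_j$, $B_j$ is a sum of Poisson$(\mu A_j)$ many independent Exp$(\mu)$ variables, with $\mu=\frac{1}{2(1-a)}$. Hence $\mathbb{E}[e^{-uA_j-vB_j}]=\rho/(\rho+u+w)$, where $\rho=\frac{1}{2(a-x)}$ and $w=\mu v/(\mu+v)$. Both Campbell terms then depend on $(u,v)$ only through $u+w$, apart from the Jacobian $w'(v)$. Substitute $v\mapsto w\in(0,\mu)$, then $z=u+w$, and use $\mathbb{P}(\text{reach }1)=x$. This gives
\[
\mathbb{P}(S>x)=\frac{1}{a^2}\int_0^\infty\frac{\min(z,\mu)}{(\lambda+z)^2(\rho+z)}\,\mathrm{d}z,\qquad \lambda=\frac{1}{2a},
\]
which is exactly the paper's expression with $n=\rho-\lambda$. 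Now differentiate in $x$, put $z=\lambda s$, and split at $s=\mu/\lambda=\frac{a}{1-a}$; this gives the two terms of \eqref{thm2a}. You need no rays or M\"obius maps.

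Some corrections:
\begin{itemize}
\item Given that an excursion above $x$ reaches $1$, its total local time at $1$ is exponential with mean $2(1-x)$, not proportional to $1-a$. Only each sub-excursion above $a$ that reaches $1$ has mean $2(1-a)$, and that is what the level-$a$ decomposition uses.
\item The double poles of the first integrand of \eqref{thm2a} are at $s=-1$ and $s=-a/(a-x)$.
\item More seriously, \eqref{thm2b} as printed has the wrong overall sign, so partial fractions will not reproduce it. As $x\downarrow 0$ it tends to $-\frac{2-a}{3a}$, whereas \eqref{thm2a} gives $+\frac{2-a}{3a}$. At $a=\frac12$, $x=\frac14$ it gives $-1.668$, against $1.668$ from \eqref{thm2a}. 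The correct closed form is $\frac{2a}{(1-a)x^3}\big[2(a-x)\ln\frac{a-x}{a}-\frac{2a-ax-x}{a}\ln(1-x)\big]$.
\item Stage~1 carries all of the convergence content, and you only gesture at it. The paper does not justify this interchange either; it says its result gives the limit only provided the change of order is justified. You should either cite \cite{HHKP24} for the convergence, or prove joint convergence under $\{N_t>0\}$ of the rescaled contour together with the normalised occupation measures at levels $at$ and $t$. Level $t$ is the delicate one, since the conditioning acts there.
\end{itemize}
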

Note that \eqref{thm2a} agrees with \eqref{density_a} if we let $k=2$ and make trivial substitutions. (It can be argued that sampling a particle alive at time $t$ 
and another particle alive at time $at$ independently in a tree conditioned to survive to time $t$ is equivalent to sampling two particles alive at time $at$ independently in a tree conditioned to survive to time $t$).

\begin{figure}[htbp]
\begin{center}
\includegraphics[scale=0.9]{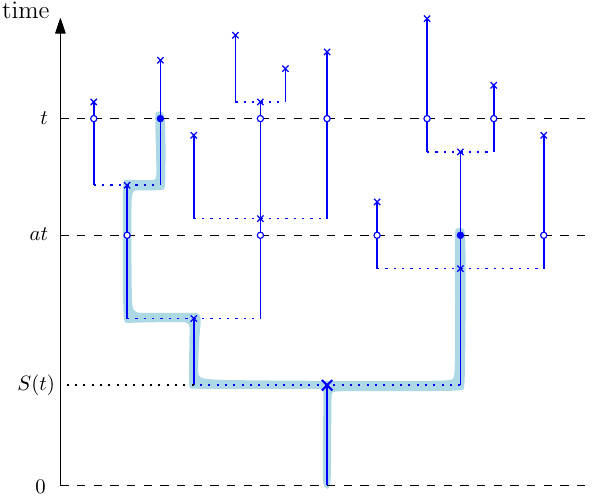}
\caption{Split times of two particles sampled at times $t$ and $at$.}
\end{center}
\end{figure}
Let us now recall a well-known one-to-one correspondence between a finite tree and its contour function (See, for example, \cite{L10}). The contour function drawn with a red line in Fig. \ref{contour} below is constructed in the obvious way by traversing the tree from left to right at linear speed $1$. The tree can be recovered from the contour function by identifying opposite points on the contour function along horizontal levels.
\begin{figure}[!htbp]
\begin{center}
\includegraphics[scale=0.75]{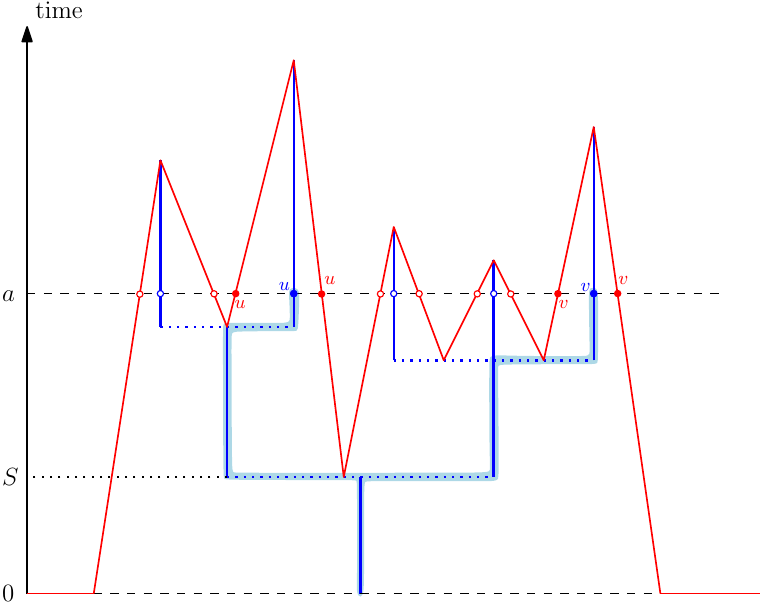}
\caption{A finite tree, its contour function and the split time of particles $u$ and $v$.}
\label{contour}
\end{center}
\end{figure}$ $

In particular, each particle alive at time $a$ in the tree corresponds to two intersections of level $a$ by the contour function and so 
\begin{equation}
\label{local_time}
N_a = \frac{1}{2}L^a,
\end{equation}
where $N_a$ is the number of particles in the tree alive at time $a$ and $L^a$ is the total local time at level $a$ accumulated by the contour function (that is, the number of visits of level $a$ by the contour function).

Moreover, the split time of any two particles in the tree would correspond to the minimum of the contour function between the corresponding intersections of the contour function and level $a$ (note that there are two points of intersection corresponding to each particle, but whichever one we choose will not affect the minimum of the excursion).

We now recall that the critical Galton-Watson tree conditioned to survive to time $t$ whose height is scaled by a factor of $t$ converges to a Continuum Random Tree (as established in the celebrated work of Aldous \cite{A91a}) while its contour function whose height and width are scaled by factors of $t$ and $\frac{v^2}{2}t$ respectively converges to a Brownian excursion conditioned to go above level $1$ (see e.g. \cite{L10}). In fact, recently Horton and Powell in \cite{HP26} have shown that this is  true in general for a very wide class of critical branching Markov processes.

Our main result therefore offers an efficient way of establishing the limiting joint law of split times of particles sampled from a critical branching process provided that the change of order in sampling and taking the limit is justified.
%====================================================================================================
\section{Proof of the Main Result}
In order to prove Theorem \ref{main} we are going to use the following two results about i.i.d. samples of uniform random variables on an interval of an exponentially-distributed length, which are of interest in their own right. 
\begin{Proposition}
\label{exp}
Suppose we are given a random variable $L \sim Exp (\lambda)$ for some $\lambda>0$ under some probability measure $\mathbb{P}$ and suppose that under $\mathbb{P}( \cdot | L)$ random variables  $U_i$, $i = 1, ..., k$ are independent and have $U[0,L]$ distribution. Let $L_i := U_{(i)}$, $i=1, ..., k$ be the order statistics of the sample $U_1$, ..., $U_k$ and let $L_0 := 0$, $L_{k+1} := L$.
\begin{figure}[htbp]
\begin{center}
\includegraphics[scale=1]{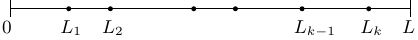}
\end{center}
\end{figure}$ $\newline
Define
\[
J_k(\theta_0, \theta_1, ..., \theta_k) := \mathbb{E} \Big( \prod_{i=0}^k \mathrm{e}^{-\theta_i (L_{i+1} - L_i)} \Big)
\]
to be the joint Laplace transform of the gaps $L_{i+1} - L_i$, $i =0$, ..., $k$. Then
\begin{equation}
\label{eq_exp}
J_k(\theta_0, \theta_1, ..., \theta_k) = k \lambda \int_0^\infty \frac{1}{\lambda + s + \theta_0} \cdot \Big[ \prod_{i=1}^{k-1} \frac{s}{\lambda + s + \theta_i} \Big] \cdot \frac{1}{\lambda + s + \theta_k} \mathrm{d} s.
\end{equation}
\end{Proposition}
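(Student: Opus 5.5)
The plan is to compute $J_k$ directly by conditioning on $L$, then exchange the order of integration so that the exponential density of $L$ produces the factors $\frac{1}{\lambda+s+\theta_i}$.

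\textbf{Step 1: condition on $L$.} Given $L=\ell$, the order statistics $(L_1,\dots,L_k)$ have density $k!/\ell^k$ on the simplex $0<l_1<\dots<l_k<\ell$. Writing $g_i=l_{i+1}-l_i$ for $i=0,\dots,k$, the gaps are uniform on $\{g_i\ge0,\ \sum_i g_i=\ell\}$ with this same density. Multiplying by the density $\lambda e^{-\lambda\ell}$ of $L$, we get
\[
J_k=\int \lambda\, \frac{k!}{\ell^k}\prod_{i=0}^k e^{-(\lambda+\theta_i)g_i}\,\mathrm{d}g_0\cdots \mathrm{d}g_k ,\qquad \ell=\sum_{i=0}^k g_i ,
\]
where the integral runs over all $g_0,\dots,g_k>0$. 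Indeed, $e^{-\lambda\ell}=\prod_i e^{-\lambda g_i}$, and the Jacobian from $(\ell,l_1,\dots,l_k)$ to $(g_0,\dots,g_k)$ equals $1$.

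\textbf{Step 2: remove the factor $\ell^{-k}$.} The only obstruction to factorization is $\ell^{-k}$, which we rewrite using
\[
\frac{k!}{\ell^k}=k\int_0^\infty s^{k-1}e^{-s\ell}\,\mathrm{d}s .
\]
This identity holds because $\int_0^\infty s^{k-1}e^{-s\ell}\,\mathrm{d}s=(k-1)!/\ell^k$. After Fubini (all integrands are nonnegative), the $g$-integral factorizes as $\prod_{i=0}^k \frac{1}{\lambda+s+\theta_i}$. This leaves
\[
J_k=k\lambda\int_0^\infty s^{k-1}\prod_{i=0}^k\frac{1}{\lambda+s+\theta_i}\,\mathrm{d}s .
\]
Distributing the $k-1$ factors of $s$ over the interior indices $i=1,\dots,k-1$ gives exactly \eqref{eq_exp}.

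\textbf{Main obstacle.} The delicate point is bookkeeping rather than analysis. We must confirm that the gap vector has constant density $k!/\ell^k$ on the simplex, that the change of variables from $(\ell,l_1,\dots,l_k)$ to $(g_0,\dots,g_k)$ has unit Jacobian, and that the boundary gaps $g_0,g_k$ carry no extra $s$ factor. The case $k=1$ serves as a sanity check: the formula reduces to $\lambda\int_0^\infty\frac{\mathrm{d}s}{(\lambda+s+\theta_0)(\lambda+s+\theta_1)}$. This can be verified directly from $\mathbb{E}\big[\int_0^1 e^{-\theta_0 uL-\theta_1(1-u)L}\,\mathrm{d}u\big]$ by partial fractions.
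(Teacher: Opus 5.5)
Your proof is correct and follows the same route as the paper. The paper's size-biased measure $\mathbb{Q}_k$, with $\frac{\mathrm{d}\mathbb{Q}_k}{\mathrm{d}\mathbb{P}}=\frac{\lambda^k}{k!}L^k$ (under which the gaps are i.i.d.\ $Exp(\lambda)$), is exactly your observation that the joint gap density is $\lambda\frac{k!}{\ell^k}\prod_i e^{-\lambda g_i}$; the paper then uses the same Gamma-integral representation of $\ell^{-k}$ and Fubini. The only difference is that you get the factorized density directly from the unit-Jacobian change of variables, whereas the paper verifies the i.i.d.\ exponential structure under $\mathbb{Q}_k$ by induction on $k$ via Laplace transforms.
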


\begin{Proposition}
\label{two_exp}
Suppose we are given independent random variables $L \sim Exp (\lambda)$ and $\tilde{L} \sim Exp (\mu)$ for some $\lambda>0$ and $\mu>0$ and let $L_0$, ..., $L_{k+1}$ be as in Proposition \ref{exp} above. Define 
\[
\tilde{J}_k(\theta_0, \theta_1, ..., \theta_k) := \mathbb{E} \Big( \prod_{i=0}^k \mathrm{e}^{-\theta_i (L_{i+1} - L_i)} \Big\vert L > \tilde{L} \Big)
\]
to be the joint Laplace transform of the gaps $L_{i+1} - L_i$, $i =0$, ..., $k$ conditional on $L > \tilde{L} $. Then
\begin{equation}
\label{eq_two_exp}
\tilde{J}_k(\theta_0, \theta_1, ..., \theta_k) = \frac{\lambda + \mu}{\mu} \Big( J_k(\theta_0, \theta_1, ..., \theta_k) \ - \ 
J_k(\theta_0 \!+\! \mu, \ \theta_1 \!+\! \mu, \ ..., \ \theta_k \!+\! \mu) \Big),
\end{equation}
where $J_k$ is the unconditional Laplace transform defined in Proposition \ref{exp} above.
\end{Proposition}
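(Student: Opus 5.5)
The plan is to compute $\tilde{J}_k$ directly from the definition, using the memoryless property of $\tilde{L}$ to turn the indicator $\mathbf{1}_{\{L > \tilde{L}\}}$ into an exponential weight in $L$. First, $\mathbb{P}(L > \tilde{L}) = \mathbb{E}\big(1 - \mathrm{e}^{-\mu L}\big) = 1 - \frac{\lambda}{\lambda+\mu} = \frac{\mu}{\lambda+\mu}$. This gives the prefactor $\frac{\lambda+\mu}{\mu}$ in \eqref{eq_two_exp}, because
\[
\tilde{J}_k(\theta_0, ..., \theta_k) = \frac{\lambda+\mu}{\mu}\, \mathbb{E}\Big( \prod_{i=0}^k \mathrm{e}^{-\theta_i (L_{i+1}-L_i)} \mathbf{1}_{\{L > \tilde{L}\}} \Big).
\]

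Next, condition on $L$ and on $U_1, ..., U_k$. Since $\tilde{L}$ is independent of all of these, we get $\mathbb{P}(\tilde{L} < L \,|\, L, U_1, ..., U_k) = 1 - \mathrm{e}^{-\mu L}$. The key observation is that $L = \sum_{i=0}^k (L_{i+1}-L_i)$ is exactly the sum of the gaps. Therefore
\[
\mathrm{e}^{-\mu L}\prod_{i=0}^k \mathrm{e}^{-\theta_i (L_{i+1}-L_i)} = \prod_{i=0}^k \mathrm{e}^{-(\theta_i+\mu) (L_{i+1}-L_i)}.
\]
Taking expectations, the unconditioned term gives $J_k(\theta_0, ..., \theta_k)$ and the subtracted term gives $J_k(\theta_0+\mu, ..., \theta_k+\mu)$. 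Multiplying by $\frac{\lambda+\mu}{\mu}$ yields \eqref{eq_two_exp}.

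The only point needing care is the tower-property step. We must check that conditioning on $\{L > \tilde{L}\}$ does not change the conditional law of the $U_i$ given $L$. This holds because $\tilde{L}$ is independent of $(L, U_1, ..., U_k)$, so the joint density of $(L, U_1, ..., U_k)$ restricted to the event is simply reweighted by $1-\mathrm{e}^{-\mu L}$. With that checked, the argument is a one-line calculation, and Proposition \ref{exp} is not even needed except to name $J_k$.
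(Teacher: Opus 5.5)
Your proposal is correct and follows the paper's proof essentially line by line. Like the paper, you normalize by $\mathbb{P}(L>\tilde{L})=\mu/(\lambda+\mu)$, condition on $L$ and the sample to replace the indicator by $1-\mathrm{e}^{-\mu L}$, and split $\mathrm{e}^{-\mu L}$ into a product over the gaps (that step uses only the exponential tail $\mathbb{P}(\tilde{L}>x)=\mathrm{e}^{-\mu x}$ and independence, not memorylessness, but this does not affect the argument).
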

The proofs of Proposition \ref{exp} and \ref{two_exp} will be given in the last section. 
Let us now present the proof of the main result. Namely that if we sample $k$ points of intersection 
of level $a \in (0,1]$ by a Brownian excursion from level $0$ conditioned to have height at least $1$ then the joint distribution of the minima of the excursion between neighbouring sampled points is given by \eqref{tail_a} and \eqref{density_a}. 

Note that \eqref{density_a} follows from \eqref{tail_a} directly since 
\[
f_{S_1, ..., S_{k-1}}(x_1, ..., x_{k-1}) = (-1)^{k-1} \frac{\partial^k}{\partial x_1 \cdots \partial x_{k-1}} \mathbb{P} \big(S_1 > x_1, \ ..., \ S_{k-1} > x_{k-1} \big).
\]
Thus we only need to prove \eqref{tail_a}. We are going to do this for the case $a=1$ first and then for the case $a \in (0,1)$.

\begin{proof}[Proof of Theorem \ref{main} ($a=1$ case)]
We consider a Brownian motion $W_t$, $t \geq 0$ with $W_0 = 1$ and we let $T_0$ be the time when it hits $0$ for the first time. In this way, $W_t$, $t \in [0, T_0]$ would correspond to the part of a Brownian excursion $X$ from \eqref{x} that makes contribution to the local time at level $1$.

We let $L^1_t$, $t \geq 0$ be the local time at level $1$ of the process $W$ and we let
\[
L := L^1_{T_0}
\]
be the amount of local time accumulated by $W$ over the time interval $[0, T_0]$. It is well-known (from either It\^o's excursion theory or the Ray-Knight theorem) that 
\[
L \sim Exp \Big(\frac{1}{2} \Big).
\]
We let $U_1$, ..., $U_k$ be random variables, which conditional on $(L^1_t)_{t \geq 0}$ are independent and have distribution 
\[
\mathbb{P} \Big( U_i \leq t \big\vert (L^1_t)_{t \geq 0} \Big) = \frac{L^1_t}{L^1_{T_0}} = \frac{L^1_t}{L},
\]
$t \in [0, T_0]$. Thus $L^1_{U_1}$, ..., $L^1_{U_k}$ are independent $U[0,L]$ random variables under $\mathbb{P}(\cdot | L)$.

We let
\[
L_i := L^1_{U_{(i)}}
\] 
where $U_{(1)}$, ..., $U_{(k)}$ are the order statistics of $U_1$, ..., $U_k$.

Finally, we let
\[
S_i := \min_{t \in [U_{(i)}, U_{(i+1)}]} W_t ,
\]
$i = 1, 2, \cdots, k-1$.

\begin{figure}[htbp]
\begin{center}
\includegraphics[scale=1.1]{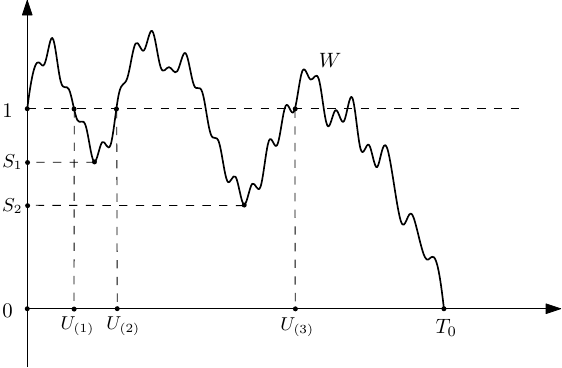}
\caption{Minima between points of intersection of an excursion of level $1$.}
\end{center}
\end{figure}

We need to show that $S_1$, ..., $S_{k-1}$ do indeed satisfy \eqref{tail_a}. Let us take any $x_1$, ..., $x_{k-1} \in (0,1]$. Then for $i \in \{1, ..., k-1\}$ the event $\{S_i \leq x_i\}$ is equivalent to the event of $W$ having an excursion from level $1$ below level $x_i$ but not below level $0$ over the interval $[L_i, L_{i+1}]$ on the local time scale. According to It\^o's excursion theory such excursions arrive according to a Poisson process with rate 
\[
n_i = \frac{1}{2(1-x_i)} - \frac{1}{2}
\] 
on the local time scale at level $1$ (see, for example, \cite{R89}). Hence
\begin{equation}
\label{laplace}
\mathbb{P} \big(S_1 > x_1, \ ..., \ S_{k-1} > x_{k-1} \big) = \mathbb{E} \Big[ \mathrm{e}^{-(L_2 - L_1) n_1} \cdots 
\mathrm{e}^{-(L_k - L_{k-1})n_{k-1}} \Big].
\end{equation}
Since $L_1$, ..., $L_{k-1}$ are the order statistics of $k$ independent $U[0,L]$ random variables where $L \sim Exp(\frac{1}{2})$ we can apply identity \eqref{eq_exp} with $\lambda = \frac{1}{2}$, $\theta_0 = \theta_k = 0$ and $\theta_i = n_i$, $i=1$, ..., $k-1$ to get
\begin{align*}
\mathbb{P} \big(S_1 > x_1, \ ..., \ S_{k-1} > x_{k-1} \big) = &J_k (0, n_1, ..., n_{k-1}, 0)\\
= &\frac{k}{2} \int_0^\infty \Big( \frac{1}{\frac{1}{2} +s} \Big)^2 \prod_{i=1}^{k-1} \frac{s}{s + \frac{1}{2(1-x_i)}} \mathrm{d}s\\
= &2k \int_0^\infty \Big( \frac{1}{1 +2s} \Big)^2 \prod_{i=1}^{k-1} \frac{2s(1-x_i)}{1 + 2s(1-x_i)} \mathrm{d}s\\
= &k \int_0^\infty \Big( \frac{1}{1 +s} \Big)^2 \prod_{i=1}^{k-1} \frac{s(1-x_i)}{1 + s(1-x_i)} \mathrm{d}s
\end{align*}
making the substitution $2s=s$ in the last line. This proves \eqref{tail_a} in the case $a=1$.
\end{proof}

\begin{proof}[Proof of Theorem \ref{main} ($a \in (0,1)$ case)]
We now consider a Brownian motion $W_t$, $t \geq 0$ with $W_0 = a \in (0,1)$ and we let $T_0$ and $T_1$ be the first hitting times of levels $0$ and $1$ respectively. In this way, conditional on $\{T_1 < T_0\}$ , $W_t$, $t \in [0, T_0]$ would correspond to the part of the Brownian excursion $X$ from \eqref{x} that makes contribution to the local time at level $a$. 

We let $L^a_t$, $t \geq 0$ be the local time at level $a$ of the process $W$ and we let 
\[
L :=L^a_{T_0}
\]
be the amount of local time at level $a$ accumulated by $W$ over the time interval $[0, T_0]$. We also let 
\[
\tilde{L} := L^a_{T_1}
\]
be the amount of local time at level $a$ accumulated by $W$ over the time interval $[0, T_1]$.

Since excursions from level $a$ above level $1$ and below level $0$ arrive independently at rates $\frac{1}{2(1-a)}$ and $\frac{1}{2a}$ respectively we have that
\[
L \sim Exp \Big(\frac{1}{2a} \Big) \ \text{ and } \ \tilde{L} \sim Exp \Big(\frac{1}{2(1-a)} \Big)
\]
independently. Moreover, 
\[
\{ T_1 < T_0 \} \equiv \{ \tilde{L} < L \}.
\]
We let $U_1$, ..., $U_k$ be random variables, which conditional on $(L^a_t)_{t \geq 0}$ are independent and have distribution 
\[
\mathbb{P} \Big( U_i \leq t \big\vert (L^a_t)_{t \geq 0} \Big) = \frac{L^a_t}{L^a_{T_0}} = \frac{L^a_t}{L},
\]
$t \in [0, T_0]$. Thus $L^a_{U_1}$, ..., $L^a_{U_k}$ are independent $U[0,L]$ random variables under $\mathbb{P}(\cdot | L)$.

We let
\[
L_i := L^a_{U_{(i)}}
\] 
where $U_{(1)}$, ..., $U_{(k)}$ are the order statistics of $U_1$, ..., $U_k$.

Finally, we let
\[
S_i := \min_{t \in [U_{(i)}, U_{(i+1)}]} W_t ,
\]
$i = 1, 2, \cdots, k-1$.

\begin{figure}[htbp]
\begin{center}
\includegraphics[scale=1]{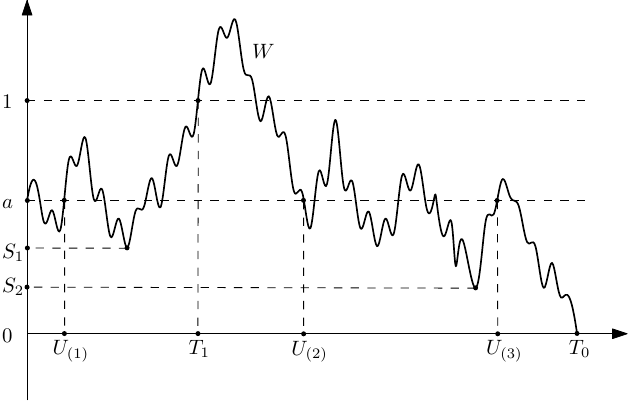}
\caption{Minima between points of intersection of an excursion of level $a$.}
\end{center}
\end{figure}
We need to show that $S_1$, ..., $S_{k-1}$ do indeed satisfy \eqref{tail_a}. Let us take any $x_1$, ..., $x_{k-1} \in (0,a]$. Then for $i \in \{1, ..., k-1\}$ the event $\{S_i \leq x_i\}$ is equivalent to the event of $W$ having an excursion from level $a$ below level $x_i$ but not below level $0$ over the interval $[L_i, L_{i+1}]$ on the local time scale. Such excursions arrive according to a Poisson process of rate 
\[
n_i = \frac{1}{2(a-x_i)} - \frac{1}{2a}
\] 
on the local time scale at level $a$. We also need to condition on the event $\{\tilde{L} < L\}$. We therefore need to calculate
\begin{equation}
\label{laplace_a}
\mathbb{P} \big(S_1 > x_1, \ ..., \ S_{k-1} > x_{k-1} \big\vert \tilde{L} < L \big) = \mathbb{E} \Big[ \mathrm{e}^{-(L_2 - L_1) n_1} \cdots 
\mathrm{e}^{-(L_k - L_{k-1})n_{k-1}} \Big\vert \tilde{L} < L\Big].
\end{equation}
Since $L_1$, ..., $L_{k-1}$ are the order statistics of $k$ independent $U[0,L]$ random variables where $L \sim Exp(\frac{1}{2a})$ we can apply identity \eqref{eq_two_exp} with $\lambda = \frac{1}{2a}$, $\mu = \frac{1}{2(1-a)}$, $\theta_0 = \theta_k = 0$ and $\theta_i = n_i$, $i=1$, ..., $k-1$ to get

\begin{align*}
\mathbb{P} \big(S_1 \!>\! x_1, \ ..., \ S_{k-1} \!>\! x_{k-1} \big\vert \tilde{L} < L \big) = &\tilde{J}_k(0, n_1, ..., n_{k-1}, 0)\\
= &\frac{\lambda\!+\!\mu}{\mu} \Big( J_k(0 , n_1, ..., n_{k-1}, 0 ) - J_k(\mu , n_1 \!+\! \mu, ..., n_{k-1} \!+\! \mu, \mu ) \Big)\\
= &\frac{k \lambda (\lambda\!+\!\mu)}{\mu} \int_0^\infty \frac{1}{(\lambda\!+\!s)^2} \prod_{i=1}^{k-1} \frac{s}{\lambda \!+\! s \!+\! n_i}\\ 
&\qquad\qquad\qquad\qquad - \frac{1}{(\lambda\!+\!s\!+\!\mu)^2} \prod_{i=1}^{k-1} \frac{s}{\lambda \!+\! s \!+\! \mu \!+\! n_i} \mathrm{d}s\\
= &\frac{k}{2a^2} \int_0^\infty \frac{1}{(\frac{1}{2a}+\!s)^2} \prod_{i=1}^{k-1} \frac{s}{s \!+\! \frac{1}{2(a-x_i)}}\\ 
&\qquad\qquad\qquad\qquad - \frac{1}{(\frac{1}{2a(1-a)} + s)^2} \prod_{i=1}^{k-1} \frac{s}{s \!+\! \frac{1}{2(1-a)} \!+\! \frac{1}{2(a-x_i)}} \mathrm{d}s\\
= &2k \int_0^\infty \frac{1}{(1+2as)^2} \prod_{i=1}^{k-1} \frac{2(a-x_i)s}{1 + 2(a-x_i)s} \mathrm{d}s\\ 
&\qquad - 2k \int_0^\infty \frac{(1-a)^2}{(1 + 2a(1-a)s)^2} \prod_{i=1}^{k-1} \frac{2(1-a)(a-x_i)s}{1 - x_i + 2(1-a)(a-x_i)s} \mathrm{d}s
\end{align*}
Making the substitution $2s = s$ in the first integral and $2(1-a)s = s$ in the second integral yields the result.
\end{proof}

%====================================================================================================
\section{Proof of Propositions \ref{exp} and \ref{two_exp}}

\begin{proof}[Proof of Proposition \ref{exp}]
Suppose $L \sim Exp (\lambda)$ under a probability measure $\mathbb{P}$. Let us define the new probability measure $\mathbb{Q}_k$ via the Radon-Nikodym derivative:
\begin{equation}
\label{radon_nikodym}
\frac{\mathrm{d} \mathbb{Q}_k}{\mathrm{d} \mathbb{P}} := \frac{\lambda^k}{k!} L^k.
\end{equation}
Let $L_0$, $L_1$, ..., $L_k$, $L_{k+1}$ be as above and define 
\[
I_k(\theta_0, \theta_1, ..., \theta_k) := \mathbb{Q}_k \Big( \prod_{i=0}^k \mathrm{e}^{-\theta_i (L_{i+1} - L_i)} \Big)
\]
to be the joint Laplace transform of the gaps $L_{i+1} - L_i$, $i =0$, ..., $k$ under $\mathbb{Q}_k$. Then 
\begin{equation}
\label{I}
I_k(\theta_0, \theta_1, ..., \theta_k) = \prod_{i=0}^k \frac{\lambda}{\lambda + \theta_i} 
\end{equation}
In other words, under $\mathbb{Q}_k$, the gaps $L_{i+1} - L_i$, $i =0$, ..., $k$ are independent $Exp(\lambda)$ random variables.

The easiest way to verify identity \eqref{I} is by induction. Indeed, if $k=0$ then $\mathbb{Q}_0 \equiv \mathbb{P}$, $L_1 = L$ and so we have
\[
I_0(\theta_0) = \mathbb{Q}_0 \big( \mathrm{e}^{-\theta_0(L_1 - L_0)} \big) = \mathbb{E} \big( \mathrm{e}^{-\theta_0 L} \big) = \frac{\lambda}{\lambda + \theta_0},
\] 
where the last equality follows from the Laplace transform formula for $L \sim Exp (\lambda)$.

If we assume that \eqref{I} holds for some $k \geq 0$ then applying the change of measure, conditioning on $L$ and substituting the joint probability density function of $L_1$, ..., $L_{k+1}$ under $\mathbb{P}(\cdot | L)$ gives the following expression for $I_{k+1}$:
\begin{align*}
I_{k+1}(\theta_0, \theta_1, ..., \theta_{k+1})  =&\mathbb{Q}_{k+1} \Big[ \prod_{i=0}^{k+1} \mathrm{e}^{-\theta_i (L_{i+1} - L_i)} \Big]\\
= &\mathbb{E} \Big[ \frac{\lambda^{k+1}}{(k+1)!} L^{k+1} \prod_{i=0}^{k+1} \mathrm{e}^{-\theta_i (L_{i+1} - L_i)} \Big]\\
=& \mathbb{E} \Big[ \mathbb{E} \Big( \frac{\lambda^{k+1}}{(k+1)!} L^{k+1} \prod_{i=0}^{k+1} \mathrm{e}^{-\theta_i (L_{i+1} - L_i)} \Big\vert L \Big) \Big]\\
= &\mathbb{E} \Big[ \frac{\lambda^{k+1}}{(k+1)!} L^{k+1} \int_{0 \leq x_1 \leq \cdots \leq x_{k+1} \leq L} \Big(
\prod_{i=0}^{k+1} \mathrm{e}^{-\theta_i (x_{i+1} - x_i)} \Big) \ \frac{(k+1)!}{L^{k+1}} \ \mathrm{d}(x_1, x_2, \cdots, x_{k+1}) \Big]\\
= &\lambda^{k+1} \mathbb{E} \Big[ \int_{0 \leq x_1 \leq \cdots \leq x_{k+1} \leq L} 
\prod_{i=0}^{k+1} \mathrm{e}^{-\theta_i (x_{i+1} - x_i)} \ \mathrm{d}(x_1, x_2, \cdots, x_{k+1}) \Big]
\end{align*}
(where $L_{k+2} = x_{k+2} = L$).

Then integrating out $x_1$ yields
\begin{align*}
I_{k+1}(\theta_0, \theta_1, \cdots, \theta_{k+1}) = &\lambda^{k+1} \mathbb{E} \Big[ \int_{0 \leq x_2 \leq \cdots \leq x_{k+1} \leq L} 
\Big( \int_0^{x_2} \mathrm{e}^{- (\theta_0 - \theta_1)x_1} \mathrm{d}x_1 \Big) \mathrm{e}^{-\theta_1 x_2} \\
& \qquad\qquad\qquad\qquad\qquad\qquad\quad \prod_{i=2}^{k+1} \mathrm{e}^{-\theta_i (x_{i+1} - x_i)} \ 
\mathrm{d}(x_2, \cdots, x_{k+1}) \Big]\\
= &\lambda^{k+1} \mathbb{P} \Big[ \int_{0 \leq x_2 \leq \cdots \leq x_{k+1} \leq L} 
\Big( \frac{1 - \mathrm{e}^{-(\theta_0 - \theta_1)x_2}}{\theta_0 - \theta_1} \Big) \mathrm{e}^{-\theta_1 x_2}\\
& \qquad\qquad\qquad\qquad\qquad\qquad\quad \prod_{i=2}^{k+1} \mathrm{e}^{-\theta_i (x_{i+1} - x_i)} \ 
\mathrm{d}(x_2, \cdots, x_{k+1}) \Big]\\
= &\frac{\lambda^{k+1}}{\theta_0 - \theta_1}  \mathbb{P} \Big[ \int_{0 \leq x_2 \leq \cdots \leq x_{k+1} \leq L} 
\Big( \mathrm{e}^{- \theta_1 (x_2 - 0)} - \mathrm{e}^{- \theta_0 (x_2 - 0)} \Big)\\
& \qquad\qquad\qquad\qquad\qquad\qquad\quad \prod_{i=2}^{k+1} \mathrm{e}^{-\theta_i (x_{i+1} - x_i)} \ 
\mathrm{d}(x_2, \cdots, x_k) \Big]\\
= & \frac{\lambda}{\theta_0 - \theta_1} \Big( I_k(\theta_1, \theta_2, \cdots, \theta_{k+1}) - I_k(\theta_0, \theta_2, \cdots, \theta_{k+1}) \Big).
\end{align*}
Then from the induction hypothesis it follows that 
\begin{align*}
I_{k+1}(\theta_0, \theta_1, \cdots, \theta_{k+1}) = &\frac{\lambda}{\theta_0 - \theta_1} \Big( \frac{\lambda}{\lambda + \theta_1} \cdot \frac{\lambda}{\lambda + \theta_2} \cdots \frac{\lambda}{\lambda + \theta_{k+1}} \ - \ \frac{\lambda}{\lambda + \theta_0} \cdot \frac{\lambda}{\lambda + \theta_2} \cdots \frac{\lambda}{\lambda + \theta_{k+1}} \Big)\\
=&\frac{\lambda}{\theta_0 - \theta_1} \Big( \frac{\lambda}{\lambda + \theta_1}  - \frac{\lambda}{\lambda + \theta_0} \Big) \prod_{i=2}^{k+1} \frac{\lambda}{\lambda + \theta_i}\\
=&\prod_{i=0}^{k+1} \frac{\lambda}{\lambda + \theta_i}. 
\end{align*}
Lets us now establish identity \eqref{eq_exp}. From basic calculus we have that
\begin{equation}
\label{reciprocal}
\frac{1}{L^k} = \frac{1}{(k-1)!} \int_0^\infty s^{k-1} \mathrm{e}^{- s L} \mathrm{d}s = \frac{1}{(k-1)!}
\int_0^\infty s^{k-1} \prod_{i=0}^k \mathrm{e}^{- s (L_{i+1} - L_i)} \mathrm{d}s.
\end{equation}
Then applying the change of measure \eqref{radon_nikodym}, identity \eqref{reciprocal}, Fubini's theorem and \eqref{I} we get 
\begin{align*}
J_k(\theta_0, \theta_1, \cdots , \theta_k) =&\mathbb{E} \Big[ \prod_{i=0}^k \mathrm{e}^{-\theta_i(L_{i+1} - L_i) }   \Big]\\
= &\mathbb{Q}^k \Big[ \frac{k!}{\lambda^k} \frac{1}{L^k} \prod_{i=0}^k \mathrm{e}^{-\theta_i(L_{i+1} - L_i) } \Big]\\
= &\frac{k!}{\lambda^k} \mathbb{Q}^k \Big[ \frac{1}{(k-1)!} \int_0^\infty s^{k-1} \prod_{i=0}^k \mathrm{e}^{-(s + \theta_i) (L_{i+1} - L_i) }   \mathrm{d}s \Big]\\
= &\frac{k}{\lambda^k} \int_0^\infty s^{k-1} I_k(s+\theta_0, \ s+\theta_1, \ \cdots , \ s+\theta_k) \mathrm{d} s\\
= & k \lambda \int_0^\infty \frac{1}{\lambda + s + \theta_0} \cdot \Big[ \prod_{i=1}^{k-1} \frac{s}{\lambda + s + \theta_i} \Big] \cdot \frac{1}{\lambda + s + \theta_k} \mathrm{d} s.
\end{align*}

\end{proof}

\begin{proof}[Proof of Proposition \ref{two_exp}]
Suppose $L \sim Exp(\lambda)$ and $\tilde{L} \sim Exp(\mu)$ are independent under a probability measure $\mathbb{P}$. Let $L_1$, ..., $L_k$, as before, be the order statistics of $k$ independent $U[0, L]$ random variables under $\mathbb{P}(\cdot | L)$. Then
\begin{align}
\label{J}
\tilde{J}_k(\theta_0, \theta_1, \cdots , \theta_k) =&\mathbb{E} \Big[ \prod_{i=0}^k \mathrm{e}^{-\theta_i(L_{i+1} - L_i)} \Big\vert L > \tilde{L}  \Big] \nonumber\\
= &\frac{1}{\mathbb{P} (L > \tilde{L})} \mathbb{E} \Big[ \prod_{i=0}^k \mathrm{e}^{-\theta_i(L_{i+1} - L_i)} \ \mathbf{1}_{ \{ L > \tilde{L}\}} \Big] \nonumber\\
= &\frac{\lambda + \mu}{\mu} \mathbb{E} \Big[ \mathbb{E} \Big( \prod_{i=0}^k \mathrm{e}^{-\theta_i(L_{i+1} - L_i)} \ \mathbf{1}_{ \{ L > \tilde{L}\}} \Big\vert L, L_1, ..., L_k\Big) \Big] \nonumber\\
= &\frac{\lambda + \mu}{\mu} \mathbb{E} \Big[ \prod_{i=0}^k \mathrm{e}^{-\theta_i(L_{i+1} - L_i)} \ \mathbb{P} \big( \tilde{L} < L \big\vert L, L_1, ..., L_k\big) \Big].
\end{align}
We note that
\begin{equation}
\label{J2}
\mathbb{P} \big( \tilde{L} < L \big\vert L, L_1, ..., L_k\big) =1 - \mathrm{e}^{-\mu L} = 1 - \prod_{i=0}^k \mathrm{e}^{-\mu (L_{i+1} - L_i)}
\end{equation}
Substituting \eqref{J2} into \eqref{J} gives 
\begin{align*}
\tilde{J}_k(\theta_0, \theta_1, \cdots , \theta_k)
= &\frac{\lambda + \mu}{\mu} \mathbb{E} \Big[ \prod_{i=0}^k \mathrm{e}^{-\theta_i(L_{i+1} - L_i)} \ \Big( 1 - \prod_{i=0}^k \mathrm{e}^{-\mu (L_{i+1} - L_i)} \Big) \Big]\\
= &\frac{\lambda + \mu}{\mu} \mathbb{E} \Big[ \prod_{i=0}^k \mathrm{e}^{-\theta_i(L_{i+1} - L_i)} -  \prod_{i=0}^k \mathrm{e}^{-(\theta_i + \mu)(L_{i+1} - L_i)} \Big]\\
= &\frac{\lambda + \mu}{\mu} \Big( J_{k}(\theta_0, \theta_1, \cdots, \theta_k) - J_{k}(\theta_0 \!+\! \mu, \ \theta_1 \!+\! \mu, \ \cdots, \ \theta_k \!+\! \mu) \Big).
\end{align*}
\end{proof}

%====================================================================================================


\begin{thebibliography}{99}
\bibitem{A91a} D. Aldous``The Continuum Random Tree I", Annals of Probability. \textbf{19(1)}, 1-28 (1991)
\bibitem{A91b} D. Aldous``The Continuum Random Tree II: an overview", in Stochastic Analysis: Proceedings of the Durham Symposium on Stochastic Analysis, 23-70 Cambridge University Press (1991)
\bibitem{A93} D. Aldous``The Continuum Random Tree I", Annals of Probability. \textbf{21(1)}, 248-289 (1993)
\bibitem{HJR20} S.C. Harris, S.G.G. Johnston, M.I. Roberts ``The coalescent structure of continuous-time Galton-Watson trees", Annals of Applied Probability. \textbf{30(3)}, 1368–1414 (2020)
\bibitem{HHKP24} S.C. Harris, E. Horton, A.E. Kyprianou, E. Powell ``Many-to-few for non-local branching Markov process", Electronic Journal of Probability. \textbf{29}, article no.41, 1-26 (2024)
\bibitem{HP26} E. Horton, E. Powell ``Convergence to the Brownian CRT for critical branching Markov processes", arXiv preprint  arXiv:2601.05906v2 (2026)
\bibitem{L10} J.-F. Le Gall ``It\^o's excursion theory and random trees", Stochastic Processes and their Applications. \textbf{120(5)}, 721-749 (2010)
\bibitem{NP89} J.Neveu, J.W. Pitman ``The branching process in a brownian excursion", S\'eminaire de probabilit\'es (Strasbourg). \textbf{23}, 248-257 (1989)
\bibitem{P04} L. Popovic ``Asymptotic genealogy of a critical branching process", Annals of Applied Probability. \textbf{14(4)}, 2120-2148 (2004)
\bibitem{R89} L.C.G. Rogers ``A Guided Tour through Excursions", Bulletin of the London Mathematical Society. \textbf{21(4)}, 305-341 (1989)
\bibitem{Z75} A.M. Zubkov ``Limiting Distributions of the Distance to the Closest Common Ancestor", Theory of Probability and its Applications. \textbf{20}, 602-612 (1975) 

\end{thebibliography}
\end{document}